\newtheorem{lem}{Lemma}
\newtheorem{prop}{Proposition}
\newtheorem{thm}{Theorem}
\theoremstyle{definition}
\newtheorem{dfn}{Definition}
\newcommand{\bbk}{\mathbbm{k}}
\newcommand{\Z}{\mathbb{Z}}
\newcommand{\Q}{\mathbb{Q}}
\newcommand{\R}{\mathbb{R}}
\newcommand{\di}[2]{\begin{bmatrix}#1\\#2\end{bmatrix}}
\numberwithin{equation}{section}
\title[Consistency relations of rank 2 cluster scattering diagrams]
{Consistency relations of rank 2 cluster scattering diagrams of affine type and
pentagon relation}
\author{Kodai Matsushita}
\address{(Kodai Matsushita) Graduate School of Mathematics, Nagoya University, Chikusa-ku, Nagoya, 464-8602 Japan}
\email{m19043g@math.nagoya-u.ac.jp}
\date{}
\begin{document}

\maketitle

\begin{abstract}
    In this paper, we prove the 
consistency relations of rank $2$ cluster 
scattering diagrams of affine type by using the pentagon relation. 

\end{abstract}

\section{Introduction}
The cluster algebras are commutative algebras introduced by Fomin and Zelevinsky~\cite{FZ02}. 
The positivity of coefficients of $F$-polynomials and 
the sign-coherence of $c$-vectors were important conjectures. 
These were proved in \cite{gross2018canonical} by using scattering diagram methods. 
Scattering diagrams for cluster algebras are characterized by 
the consistency relations in their structure groups $G$. 
\par
The structure group $G$ of a given cluster scattering diagram is generated by the dilogarithm elements~\cite{gross2018canonical,nakanishiIII} 
\begin{align}
    \Psi[n]:=\exp\left(
        \sum_{j=1}^{\infty}\frac{(-1)^{j+1}}{j^2}X_{jn}.
    \right). 
\end{align}
The precise definition of $G$ is given in \S 2. 
These elements satisfy pentagon relations:
\begin{align}
    \Psi[n]^c\Psi[n']^c=\Psi[n]^c\Psi[n+n']^c\Psi[n]^c
\end{align}
where $\{n,n'\}=c^{-1}$. 
\par
In this paper, we prove the 
consistency relations of rank $2$ cluster 
scattering diagrams of affine type, namely 
types $A_1^{(1)}$ and $A_2^{(2)}$. 
More precisely, 
we prove the following theorem. For simplicity, let $\di{n_1}{n_2} = \Psi[(n_1,n_2)]$. 
\begin{thm}
The following relations holds:
\begin{align}
    \begin{bmatrix}
    0\\1
    \end{bmatrix}^2
    \begin{bmatrix}
    1\\0
    \end{bmatrix}^2
    =
    \begin{bmatrix}
    1\\0
    \end{bmatrix}^2
    \begin{bmatrix}
    2\\1
    \end{bmatrix}^2
    \begin{bmatrix}
    3\\2
    \end{bmatrix}^2
    \cdots\prod_{j=0}^\infty
    \begin{bmatrix}
    2^j\\2^j
    \end{bmatrix}^{2^{2-j}}
    \cdots
    \begin{bmatrix}
    2\\3
    \end{bmatrix}^2
    \begin{bmatrix}
    1\\2
    \end{bmatrix}^2
    \begin{bmatrix}
    0\\1
    \end{bmatrix}^2, \label{A11}
\end{align}
\begin{align}\label{A22}
    \begin{bmatrix}
    0\\1
    \end{bmatrix}^4
    \begin{bmatrix}
    1\\0
    \end{bmatrix}
    &=
    \begin{bmatrix}
    1\\0
    \end{bmatrix}
    \begin{bmatrix}
    1\\1
    \end{bmatrix}^4
    \begin{bmatrix}
    3\\4
    \end{bmatrix}
    \begin{bmatrix}
    2\\3
    \end{bmatrix}^4
    \begin{bmatrix}
    5\\6
    \end{bmatrix}
    \begin{bmatrix}
    3\\5
    \end{bmatrix}^4
    \cdots\\&\quad\times
    \begin{bmatrix}
    1\\2
    \end{bmatrix}^6
    \prod_{j=1}^\infty
    \begin{bmatrix}
    2^j\\2^{j+1}
    \end{bmatrix}^{2^{2-j}}
    \cdots
    \begin{bmatrix}
    5\\12
    \end{bmatrix}
    \begin{bmatrix}
    2\\5
    \end{bmatrix}^4
    \begin{bmatrix}
    3\\8
    \end{bmatrix}
    \begin{bmatrix}
    1\\3
    \end{bmatrix}^4
    \begin{bmatrix}
    1\\4
    \end{bmatrix}
    \begin{bmatrix}
    0\\1
    \end{bmatrix}^4. 
    \notag
\end{align}
Moreover, these formulas can be reduced to
trivial relations by iteration of 
the pentagon relations.
\end{thm}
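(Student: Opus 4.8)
The plan is to prove both identities inside the structure group $G$. Recall from \S2 that $G=\varprojlim_{N}G_{<N}$, where $G_{<N}$ is the finite-dimensional nilpotent group obtained by killing the Lie algebra in total degree $\ge N$; since $G$ is pro-unipotent it is uniquely divisible, so the fractional exponents $\di{2^j}{2^j}^{2^{2-j}}$ are unambiguous. An identity of (infinite) products in $G$ holds if and only if the corresponding identity holds in every $G_{<N}$, and in a fixed $G_{<N}$ all but finitely many of the factors $\di{n_1}{n_2}^{e}$ on either side of \eqref{A11} and \eqref{A22} are trivial, because $\di{n_1}{n_2}=1$ in $G_{<N}$ once $n_1+n_2\ge N$. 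So the first step reduces the theorem to a family of finite-word identities in the $G_{<N}$, together with a uniformity statement: the sequence of pentagon rewrites used must be compatible with the projections $G_{<N}\twoheadrightarrow G_{<N'}$.

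The engine is the pentagon relation of the statement, used as a rewriting rule. Before starting I would record its two elementary consequences: (i) $\di{m_1}{m_2}$ and $\di{n_1}{n_2}$ commute whenever the form $\{(m_1,m_2),(n_1,n_2)\}$ vanishes --- in particular the central factors $\di{2^j}{2^j}$ (resp.\ $\di{2^j}{2^{j+1}}$) commute among themselves; and (ii) the pentagon, read from left to right, turns a pair of adjacent dilogarithm elements into a product of three, the middle one having strictly larger total degree. The strategy is then to begin from the left-hand side of \eqref{A11} (resp.\ \eqref{A22}) written as a product of primitive dilogarithm elements, and to apply pentagon moves so as to split off the factors of the right-hand side one at a time, starting at the two extreme slopes $0$ and $\infty$ and working inward toward the accumulation ray $\R_{\ge0}(1,1)$ (resp.\ $\R_{\ge0}(1,2)$). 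Since each move raises the total degree of the newly created generator, in a fixed $G_{<N}$ only finitely many moves act nontrivially, so the process terminates there and the required uniformity is automatic; conceptually this is just transporting the consistency relation of the cluster scattering diagram along the infinite mutation sequence of the rank $2$ affine seed, one pentagon per mutation.

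The step I expect to be the genuine obstacle is the accumulation ray itself, i.e.\ pinning down the factor $\prod_{j\ge0}\di{2^j}{2^j}^{2^{2-j}}$ in \eqref{A11} and $\di{1}{2}^{6}\prod_{j\ge1}\di{2^j}{2^{j+1}}^{2^{2-j}}$ in \eqref{A22}. The splitting procedure never reaches slope $1$ (resp.\ $2$), so one must show that the element left in the middle once all the walls of slope $<1$ and $>1$ (resp.\ $<2$ and $>2$) have been removed is exactly this infinite product. I would isolate this as a lemma and prove it by a direct computation of the coefficient of $X_{m(1,1)}$ (resp.\ $X_{m(1,2)}$): the self-similarity $2^{j}\mapsto2^{j+1}$ along the ray makes the geometric exponents $4,2,1,\tfrac12,\dots$ telescope into a closed form, which one then matches against the coefficients forced by consistency; alternatively one can exhibit a second infinite family of pentagon relations among the commuting central generators that collapses the tentative middle product to the correct limit. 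Once this lemma is in place, multiplying \eqref{A11} (resp.\ \eqref{A22}) on the right by the inverse of its right-hand side yields a word that the same pentagon moves, run in reverse, cancel to the identity --- this is precisely the assertion that the formulas ``reduce to trivial relations by iteration of the pentagon relations,'' and its compatibility with the quotients $G_{<N}$ is the local finiteness already noted.
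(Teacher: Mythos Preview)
Your outline is accurate up to the point you flag as the ``genuine obstacle,'' and that is indeed where the argument breaks. Neither of your two proposed fixes for the accumulation ray works as stated. A direct computation of the coefficient of $X_{m(1,1)}$ would establish the identity in $G$, but it would not establish the \emph{second} assertion of the theorem, that the relation is obtained purely by iterating pentagon moves; you cannot then ``run the pentagon moves in reverse'' through a step that was not itself a pentagon move. And your alternative, ``a second infinite family of pentagon relations among the commuting central generators,'' is impossible: the elements $\di{2^j}{2^j}$ all lie on the same ray, so $\{2^i(1,1),2^j(1,1)\}=0$ and there is \emph{no} pentagon relation among them---only commutativity. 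So the exponents $4,2,1,\tfrac12,\dots$ cannot be produced by pentagon moves applied to elements on the limit ray alone.

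The idea you are missing is a \emph{self-similar recursion} that generates the central product without ever working on the limit ray. One first proves the general statement: for any $n,n'$ with $\{n',n\}=c^{-1}$,
\[
[n']^{2c}[n]^{2c}=\prod_{p\ge0}^{\to}[n+p(n{+}n')]^{2c}\;\prod_{p\ge0}[2^p(n{+}n')]^{4c/2^p}\;\prod_{p\ge0}^{\gets}[n'+p(n{+}n')]^{2c}.
\]
The induction is on the least $k$ with $k(n+n')\in L$. Two applications of the pentagon (in the packaged form $[n']^{2c}[n]^{c}=[n]^{c}[n{+}n']^{2c}[n{+}2n']^{c}[n']^{2c}$) rewrite the left side so that the factor $[n{+}2n']^{c}[n]^{c}$ sits in the middle. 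Since $\{n{+}2n',n\}=(c/2)^{-1}$ and $(k{-}1)\cdot 2(n{+}n')\in L$, the \emph{induction hypothesis itself}, with $c\to c/2$ and $n'\to n{+}2n'$, expands this middle factor and produces $\prod_{p\ge1}[2^p(n{+}n')]^{4c/2^p}$ together with the even-indexed outer walls; a separate pentagon lemma (your ``split off from the extremes'' step, made precise) then interleaves the odd-indexed walls and supplies the missing $p=0$ term $[n{+}n']^{4c}$. Every step is a pentagon move, so both assertions of the theorem follow at once. The $A_2^{(2)}$ formula is then derived from this one by a further layer of pentagon lemmas, not by an independent argument.
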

The relations (\ref{A11}) and (\ref{A22}) 
are the (unique) consistency relations of type $A_1^{(1)}$ 
and type $A_2^{(2)}$, respectively. 
\par
We say a product of dilogarthm elements is ordered,
(resp. anti-ordered)
if,
for any adjacent pair $\di{n_1}{n_2}\di{n'_1}{n'_2}$, 
the inequality 
$n_1 / n_2 \le n'_1 / n'_2$ (resp. $n_1 / n_2 \le n'_1 / n'_2$)
holds. 
The consistency relations of scattering diagrams in $\R^2$ have 
the form of 
\begin{align}
    \textrm{``anti-ordered product''} = \textrm{``ordered product''}. 
\end{align}
It was shown that the consistency relations are generated by 
the pentagon relation~\cite{nakanishiIII}, 
and the above theorem provides an simplest examples
involving the \emph{infinite} product.
\par
We remark that the relations 
(\ref{A11}) first proved by \cite{reineke2010poisson} by 
using quiver representations. 
Also, the relations 
(\ref{A11}) and (\ref{A22}) 
were proved by cluster mutation technique
by \cite{R20}. 
\par
In \S \ref{N2NNQ}, we 
introduce dilogarithm elements and
the pentagon relations.
In \S \ref{IN1N2N3}, 
we prove a generalization of 
the formula (\ref{A11}). 
In \S \ref{c1n01n10}, 
we prove a generalization of 
the formula (\ref{A22}) 
by using a results of \S \ref{IN1N2N3}.
\section{Dilogarithm elements and pentagon relation}
\label{N2NNQ}
Let $N$ be a rank $2$ lattice with 
a skew-symmetric bilinear form
\begin{equation*}
    \{\cdot, \cdot\}
    \colon N \times N \longrightarrow 
    \mathbb{Q}.
\end{equation*}
Let $e_1$, $e_2$ be 
a basis of $N$, and we define
$$N^+:=\{a_1 e_1 + a_2 e_2 \mid a_1, a_2 \in \mathbb{Z}_{\ge 0}, a_1 + a_2 >0\}.$$
Let $\bbk$ be a field of characteristic 0, and we define an $N^+$-graded Lie algebra 
$\mathfrak{g}$ over $\bbk$ with generators $X_n$ such that
\begin{align*}
    \mathfrak{g}=\bigoplus_{n\in N^+}\mathfrak{g}_n, 
    \quad
    \mathfrak{g}_n = \bbk X_n,
    \quad
    [X_n, X_{n'}] = \{n, n'\}X_{n+n'}.
\end{align*}
Let
$\mathcal{L} := \{L \subset N^+ \mid N^+ + L \subset L, 
\#(N^+ \setminus L) < \infty$\}.
For $L \in \mathcal{L}$, we define a Lie algebra ideal 
$\mathfrak{g}^L:=\bigoplus_{n \in L} \mathfrak{g}_n$.
and the quotient of $\mathfrak{g}$ by $\mathfrak{g}^L$ 
\begin{equation*}
    \mathfrak{g}_L
    := \mathfrak{g}/\mathfrak{g}^L
    = \bigoplus_{n \in N^+ \setminus L} \mathfrak{g}_n
    \quad
    (\text{as a vector space}).
\end{equation*}
Let $G_L$ be a group with a set bijection
\begin{equation*}
    \exp_L \colon 
    \mathfrak{g}_L \longrightarrow 
    G_L
\end{equation*}
and the product is defined by a
Baker-Campbell-Hausdorff (BCH) 
formula:
\begin{equation}
    \exp_L(X)\exp_L(Y)
    = \exp_L(X + Y + \frac{1}{2}[X, Y] + \frac{1}{12} [X, [X, Y]] 
    - \frac{1}{12} [Y, [X, Y]]  + \cdots).
\end{equation}
This product formula is well-defined because $\mathfrak{g}_L$ is nilpotent.
\par
For $L, L' \in \mathcal{L}$ such that $L \subset L'$, there exists the canonical 
Lie algebra homomorphism 
$\mathfrak{g}_L \longrightarrow \mathfrak{g}_L'$, which induces 
the group homomorphism 
$G_L \longrightarrow G_L'$.
Thus, by the inverse limit we obtain 
a Lie algebra $\hat{\mathfrak{g}}$ and a group $G$: 
\begin{equation*}
    \hat{\mathfrak{g}}
    :=\varprojlim_{L \in \mathcal{L}}\mathfrak{g}_L , \quad
    G := \varprojlim_{L \in \mathcal{L}} G_L.
\end{equation*}
There is a set bijection
\begin{equation*}
    \exp \colon
    \hat{\mathfrak{g}}
     \longrightarrow G, 
    \quad
        (X_L)_{L\in \mathcal{L}} \longmapsto
    (\exp_L(X_L))_{L\in \mathcal{L}}.
\end{equation*}
We use an infinite sum to express 
an element of $\hat{\mathfrak{g}}$.
\par
We define important elements in $G$:
\begin{dfn}[Dilogarithm element]
For any $n \in N^+$, define 
\begin{equation*}
    [n] := \exp \left(\sum_{j>0} \frac{(-1)^{j+1}}{j^2} 
    X_{j n}\right) \in G.
\end{equation*}
    We call $[n]$ a \emph{dilogarithm element} for $n$.
\end{dfn}
For $c \in \mathbb{Q}$ and $g = \exp(X) \in G$, we define $g^c := \exp(cX)$.
\begin{prop}[Pentagon relation
\cite{gross2018canonical},
\cite{nakanishiIII}]
Let $n, n' \in N^+$.
Then, the following relations hold in $G$:
\begin{enumerate}
    \item If $\{n', n\} = 0$, then
    $[n'][n]=[n][n']$,
    \item If $\{n', n\} = c^{-1}$ $(c \in \mathbb{Q} \setminus {0})$, then
    \begin{equation*}
        [n']^c [n]^c = 
        [n]^c [n+n']^c [n']^c
        \quad \text{(pentagon relation)}.
    \end{equation*}
\end{enumerate}
\end{prop}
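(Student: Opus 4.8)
The plan is to prove part (1) directly from the bracket relations, and to prove part (2) by realizing the relevant subgroup of $G$ as automorphisms of a completed monomial algebra, where each dilogarithm element acquires the familiar closed form of a Poisson/cluster wall-crossing, and then to verify the pentagon as an identity of such automorphisms on two generators. For part (1), note that $\{n',n\}=0$ gives $\{jn',kn\}=jk\{n',n\}=0$ as well as $\{jn,kn\}=\{jn',kn'\}=0$ for all $j,k>0$, so every bracket among the generators $X_{jn}$ and $X_{jn'}$ vanishes. Writing $[n]=\exp(A)$ and $[n']=\exp(B)$ with $A=\sum_{j>0}\frac{(-1)^{j+1}}{j^2}X_{jn}$ and $B$ the analogous sum for $n'$, we get $[A,B]=0$, so the BCH formula collapses and $[n'][n]=\exp(A+B)=[n][n']$.

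For part (2), I would introduce the completed monomial algebra $A=\bbk[[z_m : m\in N^+]]$ with $z_m z_{m'}=z_{m+m'}$, and let $\mathfrak{g}$ act by derivations via $X_n\mapsto D_n$ where $D_n(z_m):=\{n,m\}z_{n+m}$. A short computation using $\{n,n'+m\}=\{n,n'\}+\{n,m\}$ shows $[D_n,D_{n'}]=\{n,n'\}D_{n+n'}$, so this is a Lie algebra homomorphism; since each $D_n$ raises degree by $|n|>0$ it is pro-nilpotent on the completion, so it exponentiates to a group homomorphism $\rho\colon G\to\operatorname{Aut}(A)$. I would then compute the automorphism attached to $[n]^c$: summing the defining series gives $D_{[n]}(z_m)=\{n,m\}\bigl(\sum_{j>0}\tfrac{(-1)^{j+1}}{j}z_{jn}\bigr)z_m=\{n,m\}\log(1+z_n)\,z_m$, and since $D_{[n]}(z_n)=0$ the factor $\log(1+z_n)$ is invariant under the flow, so exponentiation yields the closed form
\[
    \rho([n]^c)\colon\ z_m\longmapsto (1+z_n)^{c\{n,m\}}z_m .
\]

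With this in hand the pentagon becomes a finite check. Writing $p=z_n$, $q=z_{n'}$ and using $\{n',n\}=c^{-1}$, $\{n,n'\}=-c^{-1}$, $z_{n+n'}=pq$, $\{n+n',n\}=c^{-1}$, $\{n+n',n'\}=-c^{-1}$, the three factors act on generators by $\rho([n]^c)\colon(p,q)\mapsto(p,(1+p)^{-1}q)$, $\rho([n']^c)\colon(p,q)\mapsto((1+q)p,\,q)$, and $\rho([n+n']^c)\colon(p,q)\mapsto((1+pq)p,\,(1+pq)^{-1}q)$. Composing in the two prescribed orders and simplifying the resulting rational expressions, both $\rho([n']^c[n]^c)$ and $\rho([n]^c[n+n']^c[n']^c)$ send $p\mapsto(1+q)p$ and $q\mapsto q/(1+p+pq)$; since a continuous automorphism of $A$ is determined by the images of the topological generators $z_n,z_{n'}$, the two sides agree in $\operatorname{Aut}(A)$.

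It then remains to descend from $\operatorname{Aut}(A)$ back to $G$, for which I would establish that $\rho$ is faithful on the pro-nilpotent subgroup generated by the $X_{an+bn'}$. Because $\{n',n\}\neq0$, evaluating a putative Lie-algebra kernel element on $z_n$ and on $z_{n'}$ forces all its coefficients to vanish, so $\rho$ is injective on the relevant subalgebra; injectivity on the group follows from the standard fact that $\exp(D)=\id$ for a degree-raising derivation $D$ forces $D=0$ by induction on degree. Hence the automorphism identity lifts to the asserted relation in $G$. I expect the main obstacle to be the second step: correctly exponentiating $D_{[n]}$ to its closed form and justifying the interchange of summation and exponentiation in the completion, together with the bookkeeping that turns "check on generators plus faithfulness" into a rigorous reduction. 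Once these are in place, the pentagon itself is only the finite rational-function verification above.
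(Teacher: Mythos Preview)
The paper does not prove this proposition at all; it is stated as a cited result from \cite{gross2018canonical} and \cite{nakanishiIII} and used as input for the rest of the paper. So there is no ``paper's own proof'' to compare against.

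Your proposal is essentially the standard argument one finds in those references: part (1) follows immediately from BCH once all brackets vanish, and for part (2) you represent $G$ on a completed monoid algebra, compute the closed form $\rho([n]^c)\colon z_m\mapsto (1+z_n)^{c\{n,m\}}z_m$, verify the pentagon as a two-variable rational identity, and then invoke faithfulness. Your derivation of the closed form is correct (the key point that $D_{[n]}(z_n)=0$ so $\log(1+z_n)$ is a constant of the flow is exactly right), and the check on $(p,q)$ that both sides send $p\mapsto (1+q)p$ and $q\mapsto q/(1+p+pq)$ is accurate.

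One technical point to tighten: you claim that a continuous automorphism of $A$ is determined by the images of $z_n$ and $z_{n'}$, but this is only true if $n,n'$ generate $N^+$ as a monoid, which is not assumed. Since $\{n',n\}\neq 0$ forces $n,n'$ to be $\mathbb{Q}$-linearly independent, the clean fix is to restrict everything to the completed subalgebra $A'\subset A$ spanned by $z_{an+bn'}$ with $a,b\ge 0$: all three factors $[n]^c,[n+n']^c,[n']^c$ preserve $A'$, the automorphisms of $A'$ \emph{are} determined by the images of $z_n,z_{n'}$, and your faithfulness argument (evaluating a kernel element on $z_n$ and $z_{n'}$ and using nondegeneracy of $\{\cdot,\cdot\}$) goes through verbatim for the sub-Lie-algebra $\bigoplus_{a,b\ge 0}\bbk X_{an+bn'}$. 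With that adjustment the argument is complete.
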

\section{Proof of formula (\ref{A11})}\label{IN1N2N3}
For a subset $I=\{i_1 < i_2 < i_3 <\cdots\}$ of $\Z$ 
and a sequence $(a_i)_{i\in I}$
of elements of $G$, 
we write 
\begin{equation*}
    \prod_{i \in I}^{\longrightarrow} a_i := 
    a_{i_1} a_{i_2} a_{i_3} \cdots
    , \quad
    \prod_{i \in I}^{\longleftarrow} a_i := 
    \cdots
    a_{i_3} a_{i_2} a_{i_1}.
\end{equation*}
For example, $\prod_{i \ge 0}
^{\longrightarrow} a_i = 
    a_{0} a_{1} a_{2} \cdots
$ and $
    \prod_{i\ge 0}
    ^{\longleftarrow} a_i = 
    \cdots
    a_2 a_{1} a_{0}$.
\par
The following is the main theorem 
of this section:
\begin{thm}[]\label{thm1}
If $\{n', n\} = c^{-1}$ $(c \in \mathbb{Q}\setminus{\{0\}})$, 
then
\begin{equation} \label{eq0}
    [n']^{2c} [n]^{2c}
    =
    \prod_{p \ge 0}
    ^{\longrightarrow}
    [n+p(n+n')]^{2c}
    \prod_{p \ge 0}
    [2^p(n+n')]^{4c/2^p}
    \prod_{p\ge 0}^{\longleftarrow}
    [n'+p(n+n')]^{2c}.
\end{equation}
\end{thm}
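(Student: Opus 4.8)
The plan is to prove the identity \eqref{eq0} by a limiting argument: truncate the infinite product, establish a finite analogue by induction using only the pentagon relation, and then pass to the inverse limit. To that end, fix $L \in \mathcal{L}$ and work in the finite nilpotent group $G_L$. Since $L$ contains all but finitely many elements of $N^+$, only finitely many of the factors $[n+p(n+n')]$, $[2^p(n+n')]$, $[n'+p(n+n')]$ survive in $G_L$; choose $M$ so large that $[m(n+n')] = \mathrm{id}$ in $G_L$ for all $m \geq M$, and similarly all factors indexed by $p \geq M$ on either end are trivial. It thus suffices to prove, for every sufficiently large $M$, the finite identity
\begin{equation*}
    [n']^{2c}[n]^{2c} = \Bigl(\prod_{p=0}^{M-1} {}^{\longrightarrow}[n+p(n+n')]^{2c}\Bigr)\Bigl(\prod_{p=0}^{M-1}[2^p(n+n')]^{4c/2^p}\Bigr)\Bigl(\prod_{p=0}^{M-1}{}^{\longleftarrow}[n'+p(n+n')]^{2c}\Bigr)
\end{equation*}
modulo $\mathfrak{g}^L$, and then take the inverse limit over $L$.

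First I would establish the base mechanism: from the pentagon relation $[n']^c[n]^c = [n]^c[n+n']^c[n']^c$, applied twice, one gets
\begin{equation*}
    [n']^{2c}[n]^{2c} = [n']^c\bigl([n]^c[n+n']^c[n']^c\bigr)[n]^c.
\end{equation*}
The strategy is to repeatedly rewrite the "inner block" using three kinds of moves, all instances of the pentagon relation (or its commutation case (1) when brackets vanish): (a) a pentagon move that advances the leftmost factor one step along the ray $\mathbb{R}_{\geq 0}(n+n')$, peeling off a new $[n+p(n+n')]^{2c}$ on the left; (b) the symmetric move peeling off $[n'+p(n+n')]^{2c}$ on the right; and (c) the "doubling" move in the middle. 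The key computation is that $\{n', n+n'\} = \{n',n\} = c^{-1}$ and $\{n+n', n\} = \{n',n\} = c^{-1}$, and more generally $\{n+p(n+n'),\, n+n'\}$ and $\{n+n',\, n'+q(n+n')\}$ are again $c^{-1}$ while $\{n+p(n+n'),\, n'+q(n+n')\} = (1-p-q)\cdot\{n',n\}$, so that $[n+p(n+n')]$ and $[n'+q(n+n')]$ commute exactly when $p+q=1$ — this is what makes the telescoping close up. For the central factors one needs $\{2^p(n+n'), n+n'\} = 0$, so all the $[2^p(n+n')]$ commute with $[n+n']$ and with each other; the exponents $4c/2^p$ are forced by matching the pentagon relation $[m(n+n')]^{2c}$-factors, since $[n+p(n+n')]^{2c}[n'+(q)(n+n')]^{2c}$ when combined produces a $[(p+q+1)(n+n')]$-type term with a halved coefficient at each doubling.

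The inductive step is then: assume after $k$ rounds we have reached
\begin{equation*}
    [n']^{2c}[n]^{2c} = \Bigl(\prod_{p=0}^{k-1}{}^{\longrightarrow}[n+p(n+n')]^{2c}\Bigr)\cdot \Theta_k \cdot \Bigl(\prod_{p=0}^{k-1}{}^{\longleftarrow}[n'+p(n+n')]^{2c}\Bigr),
\end{equation*}
where $\Theta_k$ is an explicit product supported on $\mathbb{R}_{\geq 0}(n+n')$ together with the two "frontier" vectors $n+k(n+n')$ and $n'+k(n+n')$; apply pentagon moves to $\Theta_k$ to extract the next pair of frontier factors and to double-up the central part, obtaining $\Theta_{k+1}$. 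Since everything happens in the nilpotent $G_L$, after finitely many rounds $\Theta_M$ collapses to exactly $\prod_{p=0}^{M-1}[2^p(n+n')]^{4c/2^p}$ modulo $\mathfrak{g}^L$. The main obstacle is bookkeeping: tracking the precise form of $\Theta_k$ — in particular verifying that the central exponents halve correctly at each doubling step and that no spurious factors along $\mathbb{R}_{\geq 0}(n+n')$ are left over — and checking at each stage that the bracket hypothesis needed to apply the pentagon relation (case (1) or (2)) is met. Finally, the last sentence of the theorem statement ("reduced to trivial relations by iteration of the pentagon relations") follows because each rewriting move above is literally an application of the pentagon (or commutation) relation, so the derivation itself exhibits \eqref{eq0} as a consequence of iterated pentagons; I would close by remarking that reading the argument in $G_L$ and letting $L$ shrink gives the identity in $G = \varprojlim_L G_L$.
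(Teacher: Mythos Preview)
Your overall scaffolding---work modulo each $L\in\mathcal{L}$, prove a truncated identity by iterating pentagon relations, then pass to the inverse limit---is sound and is exactly how the paper frames the argument. But the heart of the proof, the inductive step, is missing, and what you sketch in its place does not match what actually happens.

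First, a concrete error: your bracket computation is off. One has
\[
\{\,n+p(n+n'),\ n'+q(n+n')\,\}=(1+p+q)\{n,n'\}=-(1+p+q)c^{-1},
\]
not $(1-p-q)\{n',n\}$, so $[n+p(n+n')]$ and $[n'+q(n+n')]$ never commute for $p,q\ge 0$. The ``telescoping closes up because $p+q=1$'' heuristic is therefore wrong, and with it the picture of $\Theta_k$ as supported only on the central ray plus \emph{two} frontier vectors. If that description were correct you could finish easily; it is not, and this is why the ``bookkeeping'' you flag as the main obstacle is in fact an unresolved structural problem, not just bookkeeping.

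The paper's induction is organized differently and supplies exactly the missing idea. One does not peel layers off from the outside. Instead, using the relation $[n']^{2c}[n]^c=[n]^c[n+n']^{2c}[n+2n']^c[n']^{2c}$ twice, one rewrites
\[
[n']^{2c}[n]^{2c}=[n]^c[n+n']^{2c}\bigl([n+2n']^{c}[n]^{c}\bigr)[n+n']^{2c}[n+2n']^{c}[n']^{2c}.
\]
The crucial observation is that $\{n+2n',n\}=(c/2)^{-1}$, so the middle block $[n+2n']^{c}[n]^{c}=[n+2n']^{2(c/2)}[n]^{2(c/2)}$ is another instance of the \emph{same} theorem, with the pair $(n,n+2n')$ and constant $c/2$; since $(n)+(n+2n')=2(n+n')$, the relevant degree along $n+n'$ drops, and the induction runs on the least $k$ with $k(n+n')\in L$. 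This self-similarity is what produces the exponents $4c/2^p$ automatically. After applying the inductive hypothesis one gets products spaced along $2(n+n')$; a separate ``interleaving'' lemma (proved first for finite products and then in the limit) of the shape
\[
[n+n']^{2c}\prod_{p\ge 0}^{\longrightarrow}[n+2p(n+n')]^{c}
=[n]^{c}\prod_{p\ge 1}^{\longrightarrow}[n+p(n+n')]^{2c}\,[n+n']^{2c}
\]
(and its mirror) converts these into the desired products spaced along $n+n'$ and simultaneously manufactures the missing outer factors $[n]^{c}$ and $[n']^{c}$. Your proposal contains neither the self-similar recursion nor this interleaving step, and without them the inductive passage from $\Theta_k$ to $\Theta_{k+1}$ cannot be completed as described.
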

The case of $c=1$, $n=[(1, 0)]$, $n'=[(0, 1)]$ 
is nothing but the formula (\ref{A11}).
\par
To prove this theorem, we introduce some
notations and lemmas.
\par
Let $L \in \mathcal{L}$.
For two elements $g_1$, $g_2$ of $G$, 
let us denote $g_1 \equiv g_2 \mod L$ 
if their images in $G_L$ 
are identical. 
For example, if $n \in N^+$ is in $L$, then $[n] \equiv \exp(0) = 1_G \mod L$.
By the definition of $G$, two 
elements $g_1, g_2$ of $G$ are identical 
if and only if 
$g_1 \equiv g_2 \mod L$ 
for all $L \in \mathcal{L}$. 
\par
\begin{lem}
If $\{n',n\}=c^{-1}$ 
$(c \in \mathbb{Q}\setminus\{0\})$, 
we obtain
\begin{align}
    [n']^c [n]^{2c}=
    [n]^{2c} [2n+n']^c 
    [n+n']^{2c} [n']^{c},  \label{b2}
    \\
    [n']^{2c} [n]^c=
    [n]^c [n+n']^{2c} [n+2n']^{c} [n']^{2c}.
    \label{B2}
\end{align}
\end{lem}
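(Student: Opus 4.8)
The plan is to derive \eqref{b2} and \eqref{B2} by applying the pentagon relation repeatedly, starting from the simplest case. The two relations are related by swapping the roles of $n$ and $n'$ (note $\{n,n'\} = -c^{-1} = (-c)^{-1}$), so it suffices to prove one of them carefully, say \eqref{b2}, and then obtain \eqref{B2} by the substitution $n \leftrightarrow n'$, $c \mapsto -c$; I would spell out at least once why this symmetry is legitimate.

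For \eqref{b2}, the idea is to bootstrap from the pentagon relation $[n']^c[n]^c = [n]^c[n+n']^c[n']^c$. First I would observe that $\{n', n+n'\} = \{n',n\} = c^{-1}$, so the pair $(n', n+n')$ again satisfies a pentagon relation, and similarly $\{n+n', n\} = c^{-1}$. The strategy is: write $[n']^c[n]^{2c} = [n']^c[n]^c\cdot[n]^c$, apply the pentagon relation to the first two factors to get $[n]^c[n+n']^c[n']^c[n]^c$, then apply the pentagon relation again to the middle pair $[n']^c[n]^c$, yielding $[n]^c[n+n']^c[n]^c[n+n']^c[n']^c$. Now I need to reorganize the middle $[n+n']^c[n]^c$; since $\{n+n', n\} = c^{-1}$ one has the pentagon relation $[n]^c[n+n']^c = [n+n']^c[2n+n']^c[n]^c$ — wait, I would need to be careful about which direction the pentagon goes, checking the sign of $\{n+n', n\}$ versus $\{n, n+n'\}$. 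The correct move is to use the pentagon relation in the form appropriate to the pair $(n, n+n')$ to rewrite $[n+n']^c[n]^c$ (or its reverse), which introduces the new index $2n+n'$. After one more reorganization the factors should collapse to the claimed right-hand side $[n]^{2c}[2n+n']^c[n+n']^{2c}[n']^c$; the exponents $2c$ on $[n]$ and $[n+n']$ arise precisely because two copies of each get pushed to their final positions.

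The main obstacle will be bookkeeping the order of factors and the directions of the pentagon relations: each application of the pentagon relation is directional (it moves a "wrong-order" adjacent pair past a newly created middle term), and one must verify at each step that the skew form $\{\cdot,\cdot\}$ evaluated on the relevant pair equals $c^{-1}$ (not $-c^{-1}$), so that the pentagon relation applies in the orientation needed. A clean way to organize this — and the form I would actually write up — is to work modulo $L$ for arbitrary $L \in \mathcal{L}$, so that all products are finite, perform the finite sequence of pentagon moves, and then note the identity holds in $G$ since it holds modulo every $L$. I expect three or four pentagon applications suffice for \eqref{b2}, and the computation for \eqref{B2} is then immediate by symmetry.
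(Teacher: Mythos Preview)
Your approach is correct and essentially matches the paper's: three successive applications of the pentagon relation (to $[n']^c[n]^c$, then again to the new $[n']^c[n]^c$, then to $[n+n']^c[n]^c$ using $\{n+n',n\}=c^{-1}$) give \eqref{b2} directly, and the paper likewise derives the companion identity via the swap $n\leftrightarrow n'$, $c\mapsto -c$---just note that after the swap you must also invert both sides to land on \eqref{B2}. The suggestion to work modulo $L$ is harmless but unnecessary here, since every product in sight has finitely many factors.
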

\begin{proof}
The equality (\ref{b2}) can be proved by repeatedly applying 
the pentagon relation:
\begin{align*}
    [n']^{2c} [n]^c
    &= [n']^c [n']^c [n]^c
    = [n']^c [n]^c [n+n']^c [n']^c \\
    &= [n]^c [n+n']^c [n']^c [n+n']^c [n']^c 
    =[n]^c [n+n']^{2c} [n+2n']^c [n']^{2c}.
\end{align*}
Note that
$\{n', n+n'\} = c^{-1}$, thus 
we can use the pentagon identity in 
the last equality.
\par
If $\{n',n\} = c^{-1}$, then 
$\{n, n'\} = (-c)^{-1}$.
By the equality (\ref{b2}),
\begin{align}\label{b2inv}
        [n]^{-c} [n']^{2(-c)}=
    [n']^{2(-c)} [n+2n']^{-c} 
    [n+n']^{2(-c)} [n]^{-c}.
\end{align}
Taking the inverse of both sides of (\ref{b2inv}), we obtain the equality (\ref{B2}).
\end{proof}
\par
\par
The following is a key lemma:
\begin{lem} \label{lem1}
Let $l$ be a non-negative integer,
and let $n, n' \in N^+$.
If $\{n', n\} = c^{-1}$, 
\begin{equation}\label{eq6}
    [n']^{2c}
    \left(\prod_{0 \le p \le l}^{\longrightarrow}
    [n + 2pn']^c\right)
    =
    [n]^c
    \left(\prod_{1\le p \le 2l+1}
    ^{\longrightarrow}
    [n + pn']^{2c}\right)
    [n + (2l+2)n']^c [n']^{2c}
\end{equation}
\end{lem}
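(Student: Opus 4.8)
For the base case $l = 0$, the claimed identity reads
\begin{equation*}
    [n']^{2c}[n]^c = [n]^c [n+n']^{2c}[n+2n']^c [n']^{2c},
\end{equation*}
which is exactly the equality (\ref{B2}) already established in the preceding Lemma; so there is nothing to do here beyond citing it.

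For the inductive step, assume (\ref{eq6}) holds for some $l \ge 0$. The left-hand side for $l+1$ is $[n']^{2c}\bigl(\prod_{0\le p\le l}^{\longrightarrow}[n+2pn']^c\bigr)[n+2(l+1)n']^c$. I would apply the inductive hypothesis to the first $l+2$ factors (i.e.\ to $[n']^{2c}$ times the product up to $p=l$), rewriting that block as $[n]^c\bigl(\prod_{1\le p\le 2l+1}^{\longrightarrow}[n+pn']^{2c}\bigr)[n+(2l+2)n']^c[n']^{2c}$. The expression then becomes
\begin{equation*}
    [n]^c\Bigl(\prod_{1\le p\le 2l+1}^{\longrightarrow}[n+pn']^{2c}\Bigr)[n+(2l+2)n']^c[n']^{2c}[n+(2l+2)n']^c.
\end{equation*}
The trailing block $[n']^{2c}[n+(2l+2)n']^c$ has the shape $[m']^{2c}[m]^c$ with $m = n+(2l+2)n'$, $m' = n'$, and one checks $\{m',m\} = \{n',n\} = c^{-1}$, so (\ref{B2}) applies and turns it into $[n+(2l+2)n']^c\,[n+(2l+3)n']^{2c}\,[n+(2l+4)n']^c\,[n']^{2c}$. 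Substituting this back, the two consecutive copies of $[n+(2l+2)n']^c$ that now appear --- one from the end of the first product-block region, one newly produced --- merge into a single $[n+(2l+2)n']^{2c}$, which is precisely the $p = 2l+2$ term needed to extend $\prod_{1\le p\le 2l+1}$ to $\prod_{1\le p\le 2l+3}$; and $[n+(2l+3)n']^{2c}$ supplies the $p=2l+3$ term. What remains is $[n+(2l+4)n']^c[n']^{2c} = [n+(2(l+1)+2)n']^c[n']^{2c}$, which is exactly the tail of (\ref{eq6}) at level $l+1$. This closes the induction.

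The only genuine care required is the bookkeeping in the inductive step: tracking which dilogarithm factors commute past which (here no extra commutation is needed, since every manipulation is localized at the right end of the word), verifying that the skew form evaluates to $c^{-1}$ on each pair to which (\ref{B2}) is applied, and checking that the two neighbouring $[n+(2l+2)n']^c$ factors genuinely become adjacent so that $[m]^c[m]^c = [m]^{2c}$ may be used. I expect the merging-of-exponents step to be the main place a reader will want to see explicit index arithmetic, so I would display the word before and after that merge rather than describe it in prose.
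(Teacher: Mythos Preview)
Your proof is correct and follows essentially the same approach as the paper: induction on $l$, with the base case reducing to (\ref{B2}) and the inductive step handled by splitting off the last factor, applying the hypothesis, invoking (\ref{B2}) on the trailing $[n']^{2c}[n+(2l+2)n']^c$, and merging the two adjacent copies of $[n+(2l+2)n']^c$. The paper indexes the induction as $l-1 \to l$ rather than $l \to l+1$, but the argument is otherwise identical.
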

\begin{proof}
We will prove it by induction on $l$.
\par
If $l=0$, the equality (\ref{eq6}) is nothing but (\ref{B2}). 
\par
Let $l > 0$. 
Suppose that the claim is 
true in the case of $l-1$, then
by the induction hypothesis, 
\begin{align*}
    &[n']^{2c}
    \left(\prod_{0 \le p \le l}^{\longrightarrow}
    [n + 2pn']^c\right) \\
   & =
    [n']^{2c}
    \left(\prod_{0 \le p \le l-1}^{\longrightarrow}
    [n + 2pn']^c\right)
    [n+2ln']^c \\
    &=
    [n]^c
    \left(\prod_{1\le p \le 2l-1}
    ^{\longrightarrow}
    [n + pn']^{2c}\right)
    [n+2ln']^c [n']^{2c} [n+2ln']^c \\
    &=
    [n]^c
    \left(\prod_{1\le p \le 2l+1}
    ^{\longrightarrow}
    [n + pn']^{2c}\right)
    [n + (2l+2)n']^c [n']^{2c}.
\end{align*}
In the last equality, we use 
\begin{equation*}
    [n']^{2c} [n+2ln']^c
=[n+2ln']^c [n+(2l+1)n']^{2c} [n+(2l+2)n']^c
[n']^{2c},
\end{equation*}
which is a specialization of (\ref{B2}).
\end{proof}
Now we consider the limit of Lemma \ref{lem1}:
\begin{lem}\label{cor1}
If $\{n',n\} = c^{-1}$, then
\begin{align}
    [n']^{2c}     \left(\prod_{p \ge 0}
    ^{\longrightarrow}
    [n + 2pn']^c\right)
    =
    [n]^c 
    \left(\prod_{p \ge 1}
    ^{\longrightarrow}
    [n + pn']^{2c}\right)
    [n']^{2c} ,
    \label{eq3}
    \\
    \left(\prod_{p \ge 0}
    ^{\longleftarrow}
    [n' + 2pn]^c\right)
    [n]^{2c}
    =
    [n]^{2c} 
    \left(\prod_{p \ge 1}
    ^{\longleftarrow}
    [n' + pn]^{2c}\right)
    [n']^{c} .
    \label{eq4}
\end{align}
\end{lem}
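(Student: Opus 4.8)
The plan is to obtain both identities as limits of Lemma~\ref{lem1}, checking the equalities modulo every $L\in\mathcal{L}$ (and deriving the second one from the first by symmetry).

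First I would prove \eqref{eq3}. Fix $L\in\mathcal{L}$. Since $n,n'\in N^+$, every element $n+pn'$ $(p\ge 0)$ lies in $N^+$, and because $N^+\setminus L$ is finite there is an integer $p_0$ with $n+pn'\in L$, hence $[n+pn']\equiv 1_G \mod L$, for all $p\ge p_0$. Consequently the two infinite products in \eqref{eq3} have only finitely many factors nontrivial modulo $L$, so they represent well-defined elements of $G$; moreover, for any $l$ with $2l+2\ge p_0$ the finite products in \eqref{eq6} already agree with the corresponding infinite products modulo $L$, while the tail factor satisfies $[n+(2l+2)n']^c\equiv 1_G\mod L$. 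Substituting this into \eqref{eq6} gives exactly \eqref{eq3} modulo $L$. As $L$ was arbitrary and elements of $G$ are determined by their images in all the $G_L$, the identity \eqref{eq3} holds in $G$.

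Next I would deduce \eqref{eq4} from \eqref{eq3}. Since $\{n',n\}=c^{-1}$ implies $\{n,n'\}=(-c)^{-1}$, I apply \eqref{eq3} with the pair $(n',n)$ and scalar $-c$ in place of $(n,n')$ and $c$, obtaining
\begin{equation*}
    [n]^{-2c}\left(\prod_{p\ge 0}^{\longrightarrow}[n'+2pn]^{-c}\right)
    =
    [n']^{-c}\left(\prod_{p\ge 1}^{\longrightarrow}[n'+pn]^{-2c}\right)[n]^{-2c}.
\end{equation*}
Taking the inverse of both sides reverses the order of every factor (so each $\prod^{\longrightarrow}$ becomes a $\prod^{\longleftarrow}$) and negates every exponent; one checks directly that the resulting identity is precisely \eqref{eq4}.

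The only point requiring genuine care is the passage to the limit in the first step: one must be sure that the infinite products really define elements of $G$ and that the finite identity of Lemma~\ref{lem1} stabilizes modulo each $L$ once $l$ is large. This is exactly the finiteness of $N^+\setminus L$, so there is no real obstacle; the remainder is bookkeeping of orders and exponents under inversion.
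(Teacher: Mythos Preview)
Your proposal is correct and follows essentially the same approach as the paper: you pass to the limit in Lemma~\ref{lem1} by working modulo an arbitrary $L\in\mathcal{L}$ and using the finiteness of $N^+\setminus L$, and then obtain \eqref{eq4} from \eqref{eq3} by swapping $n\leftrightarrow n'$, replacing $c$ by $-c$, and inverting. The only cosmetic difference is that you pick a threshold $p_0$ for the full sequence $n+pn'$ rather than the paper's choice of $l$ with $n+2ln'\in L$, but the argument is the same.
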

\begin{proof}
Let $L \in \mathcal{L}$.
Then, by the cofiniteness of $L$, there exists some positive integer $l$ such that $n+2ln' \in L$.
Then, by Lemma \ref{lem1},
we obtain
\begin{align*}
    [n']^{2c} \left( \prod_{p \ge 0}^{\longrightarrow}
    [n+2pn']^c \right)
    &\equiv
    [n']^{2c} \left( 
    \prod_{0 \le p \le l}
    ^{\longrightarrow}
    [n+2pn']^c \right) \mod L \\
   & =
    [n]^{c} \left( 
    \prod_{1\le p\le 2l+1}
    ^{\longrightarrow}
    [n+pn']^{2c} \right)
    [n+(2l+2)n]^c [n']^{2c} \\
    &\equiv 
    [n]^c 
    \left(\prod_{p \ge 1}
    ^{\longrightarrow}
    [n + pn']^{2c}\right)
    [n']^{2c} \mod L.
\end{align*}
Thus, the equality (\ref{eq3}) holds.
\par
Since $\{n,n'\}=(-c)^{-1}$, 
by (\ref{eq3}), we obtain
\begin{equation} \label{eq5}
    [n]^{-2c}     \left(\prod_{p \ge 0}
    ^{\longrightarrow}
    [n' + 2pn]^{-c}\right)
    =
    [n']^{-c} 
    \left(\prod_{p \ge 1}
    ^{\longrightarrow}
    [n' + pn]^{-c}\right)
    [n]^{-2c}. 
\end{equation}
The the equality (\ref{eq4}) is
obtained by taking the inverse 
of the both sides of (\ref{eq5}).
\end{proof}
\begin{proof}[Proof of Theorem \ref{thm1}]
For $L \in \mathcal{L}$ 
and $k \in \mathbb{Z}_{>0}$, 
let $P_L(k)$ 
be
the following assertion:
for $n,n'\in N^+$ and 
$c \in \mathbb{Q}\setminus\{0\}$, if $\{n',n\}=c^{-1}$ and $k(n+n')\in L$, 
then
\begin{align}
    &[n']^{2c} [n]^{2c}\notag\\
    &\equiv
    \prod_{p \ge 0}
    ^{\longrightarrow}
    [n+p(n+n')]^{2c}
    \prod_{p \ge 0}
    [2^p(n+n')]^{4c/2^p}
    \prod_{p\ge 0}^{\longleftarrow}
    [n'+p(n+n')]^{2c}
    \mod L. \label{eq:modL}
\end{align}
For any $L\in \mathcal{L}$,
there exists some
positive integer $k$ such that
$k(n+n') \in L$.
Thus, if $P_L(k)$ is true for any
$k \in \mathbb{Z}_{>0}$ and $L \in \mathcal{L}$, then
a relation (\ref{eq:modL}) 
holds for any $L\in \mathcal{L}$, and Theorem \ref{thm1} is proved.
Fix $L \in \mathcal{L}$, and we prove $P_L(k)$ by induction on $k$.
\par
If $k=1$, the right hand side of (\ref{eq:modL}) is 
equivalent to 
$[n]^{2c} [n']^{2c}$ because 
$l n+ l' n' \in L$
for any $l, l' \in \mathbb{Z}_{\ge 1}$.
Since
\begin{align*}
    [n']^{c} [n]^{c}
    =[n]^c [n+n']^c [n']^c
    \equiv
    [n]^c [n']^c \mod L, 
\end{align*}
we obtain
$[n']^{2c} [n']^{2c} \equiv [n']^{2c} [n']^{2c}  \mod L$.
\par
Let $k \ge 2$, and we suppose
a proposition
$P_L(k-1)$ is true.
By the equality (\ref{B2}), 
\begin{align*}
    [n']^{2c} [n]^{2c}
    &= ([n']^{2c} [n]^c) [n]^c \\
    &=[n]^c [n+n']^{2c}
    [n+2n']^c ([n']^{2c} [n]^c) \\
    &= [n]^c [n+n']^{2c}
    ([n+2n']^c [n]^c) [n+n']^{2c} 
    [n+2n']^c [n']^{2c}.
\end{align*}
Since $(k-1)(n+(n+2n')) \in L$ 
and $\{n+2n', n\}=(c/2)^{-1}$, by the induction hypothesis, 
\begin{align*}
    &[n+2n']^c [n]^c \\
    &= [n+2n']^{(c/2)\cdot 2} [n]^{(c/2)\cdot 2} \\
    &\stackrel{(\ref{eq:modL})}{\equiv} \prod_{p \ge 0}
    ^{\longrightarrow}
    [n+p(2n+2n')]^{2\cdot(c/2)}
    \prod_{p \ge 0}
    [2^p(2n+2n')]^{4\cdot(c/2)/2^p} \\
    &\quad\times\prod_{p\ge 0}^{\longleftarrow}
    [(n+2n')+p(2n+2n')]^{2\cdot(c/2)}
    \quad \mod L \\
    &=\prod_{p \ge 0}
    ^{\longrightarrow}
    [n+2p(n+n')]^{c}
    \prod_{p \ge 1}
    [2^{p}(n+n')]^{4c/2^{p}}
    \prod_{p\ge 0}^{\longleftarrow}
    [(n+2n')+2p(n+n')]^{c}
    .
\end{align*}
Since $\{n+n', n\}=c^{-1}$ 
and $\{n+2n', n+n'\}=c^{-1}$,
by Lemma \ref{cor1}, 
\begin{align*}
    [n']^{2c} [n]^{2c}
    &\equiv
    [n]^c
    [n+n']^{2c}
    \prod_{p \ge 0}
    ^{\longrightarrow}
    [n+2p(n+n')]^{c}
    \prod_{p \ge 1}
    [2^{p}(n+n')]^{4c/2^{p}} \\
    &\quad\times\prod_{p\ge 0}^{\longleftarrow}
    [(n+2n')+2p(n+n')]^{c}\\
    &\quad\times
    [n+n']^{2c} 
    [n+2n']^c 
    [n']^{2c} \mod L \\
    &=[n]^c [n]^c
    \left(
    \prod_{p \ge 1}
    ^{\longrightarrow}
    [n+p(n+n')]^{c}
    \right)
    [n+n']^{2c}
    \prod_{p \ge 1}
    [2^{p}(n+n')]^{4c/2^{p}}\\
    &\quad\times
    [n+n']^{2c}
    \prod_{p\ge 1}^{\longleftarrow}
    [(n+2n')+p(n+n')]^{c}\\
    &\quad\times
    [n+2n']^{c} 
    [n+2n']^c 
    [n']^{2c} \\
    &=
    \prod_{p \ge 0}
    ^{\longrightarrow}
    [n+p(n+n')]^{2c}
    \prod_{p \ge 0}
    [2^p(n+n')]^{4c/2^p}
    \prod_{p\ge 0}^{\longleftarrow}
    [n'+p(n+n')]^{2c}
\end{align*}
This completes the proof of Theorem \ref{thm1}.
\end{proof}
\section{Proof of formula (\ref{A22})}
\label{c1n01n10}
The formula (\ref{A22}) is the case of 
$c=-1$, $n=[(0,1)]$, $n'=[(1,0]$ of the following theorem:
\begin{thm}\label{thm2}
If $\{n', n\}=c^{-1}$ $(c \in \Q)$, 
then we obtain
\begin{align}
    [n']^c [n]^{4c}
    &
    =
    \prod_{p\ge 0}^{\longrightarrow}
    ([(2p+1)n+p n']^{4c} 
    [(4p+4)n+(2p+1)n']^c) \notag
    \\
    &\quad\times 
    [2n+n']^{2c} \prod_{p\ge 0} 
    [2^p (2n+n')]^{4c / 2^p} \notag
    \\
    &\quad\times
    \prod_{p\ge 0}^{\longleftarrow}
    ([(2p+1)n+(p+1)n']^{4c}
    [4pn+(2p+1)n']^c). \notag
\end{align}
\end{thm}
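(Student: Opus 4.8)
\emph{Plan of proof.} The plan is to mimic the proof of Theorem~\ref{thm1}: fix $L\in\mathcal{L}$, reduce the infinite products to finite ones, prove the identity modulo $L$ by pentagon manipulations, and then let $L$ vary. Throughout put $v:=2n+n'$. The whole argument is built around one application of Theorem~\ref{thm1} to the pair $(n,n+n')$: since $\{n+n',n\}=\{n',n\}=c^{-1}$ and $n+(n+n')=v$, formula~\eqref{eq0} gives
\begin{equation*}
[n+n']^{2c}[n]^{2c}
=\Bigl(\prod_{p\ge 0}^{\longrightarrow}[n+pv]^{2c}\Bigr)
\Bigl(\prod_{p\ge 0}[2^{p}v]^{4c/2^{p}}\Bigr)
\Bigl(\prod_{p\ge 0}^{\longleftarrow}[(p+1)v-n]^{2c}\Bigr).
\end{equation*}

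First I would expose this product inside $[n']^{c}[n]^{4c}$. Writing $[n]^{4c}=[n]^{2c}[n]^{2c}$ and applying \eqref{b2} twice yields $[n']^{c}[n]^{4c}=[n]^{2c}[v]^{c}[n+n']^{2c}\,[n]^{2c}[v]^{c}[n+n']^{2c}[n']^{c}$, and replacing the central $[n+n']^{2c}[n]^{2c}$ by the identity above gives
\begin{equation*}
[n']^{c}[n]^{4c}
=[n]^{2c}[v]^{c}\Bigl(\prod_{p\ge 0}^{\longrightarrow}[n+pv]^{2c}\Bigr)
\Bigl(\prod_{p\ge 0}[2^{p}v]^{4c/2^{p}}\Bigr)
\Bigl(\prod_{p\ge 0}^{\longleftarrow}[(p+1)v-n]^{2c}\Bigr)[v]^{c}[n+n']^{2c}[n']^{c}.
\end{equation*}
Next I would establish two finite ``reduction'' identities, each by induction on a cut‑off $l$. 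The \emph{front reduction}, proved using the specialization $[v]^{c}[m]^{2c}=[m]^{2c}[2m+v]^{c}[m+v]^{2c}[v]^{c}$ of \eqref{b2} (valid because $\{v,m\}=c^{-1}$ for every $m=n+pv$), reads
\begin{equation*}
[n]^{2c}[v]^{c}\prod_{0\le p\le l}^{\longrightarrow}[n+pv]^{2c}
=\Bigl(\prod_{0\le p\le l}^{\longrightarrow}[n+pv]^{4c}[2n+(2p+1)v]^{c}\Bigr)[n+(l+1)v]^{2c}[v]^{c};
\end{equation*}
the \emph{back reduction} is the analogous identity for $\bigl(\prod_{0\le p\le l}^{\longleftarrow}[(p+1)v-n]^{2c}\bigr)[v]^{c}[n+n']^{2c}[n']^{c}$, proved with the corresponding specialization of \eqref{B2} (or deduced from the front reduction by the inversion argument of \S\ref{IN1N2N3}), its limit being $[v]^{c}\prod_{p\ge 0}^{\longleftarrow}[(p+1)v-n]^{4c}[(2p+1)v-2n]^{c}$.

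Fixing $L$ and choosing $l$ large enough that every root $n+pv$, $2n+(2p+1)v$, $(p+1)v-n$, $(2p+1)v-2n$ and $2^{p}v$ with index exceeding $l$ lies in $L$, the finite identities become the full ones modulo $L$ and all leftover factors (such as $[n+(l+1)v]$) become $1$ modulo $L$. Feeding the two reductions into the displayed expression turns it, modulo $L$, into $\bigl(\prod_{p\ge 0}^{\longrightarrow}[n+pv]^{4c}[2n+(2p+1)v]^{c}\bigr)\,[v]^{c}\bigl(\prod_{p\ge 0}[2^{p}v]^{4c/2^{p}}\bigr)[v]^{c}\bigl(\prod_{p\ge 0}^{\longleftarrow}[(p+1)v-n]^{4c}[(2p+1)v-2n]^{c}\bigr)$. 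Because $\{v,2^{p}v\}=0$, the dilogarithm $[v]$ commutes with every factor of the central product, so the two stray $[v]^{c}$ can be slid together and combined into the prefactor $[v]^{2c}=[2n+n']^{2c}$ of Theorem~\ref{thm2}; since $n+pv=(2p+1)n+pn'$, $2n+(2p+1)v=(4p+4)n+(2p+1)n'$, $(p+1)v-n=(2p+1)n+(p+1)n'$ and $(2p+1)v-2n=4pn+(2p+1)n'$, this is exactly the asserted formula modulo $L$, hence (letting $L$ vary) in $G$. The reduction ``to trivial relations by iteration of the pentagon relations'' then follows since every move above is a composition of pentagon relations, together with the corresponding statement already proved for Theorem~\ref{thm1}. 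I expect the main obstacle to be locating the right intermediate form (exposing precisely $[n+n']^{2c}[n]^{2c}$) and then the bookkeeping: correctly tracking the two interleaved families of real roots (those carrying exponent $4c$ and those carrying exponent $c$) together with the travelling factor $[v]^{c}$, and making the single choice of $l$ uniform over all these families so that the passage to the limit is legitimate.
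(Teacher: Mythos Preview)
Your proposal is correct and follows essentially the same route as the paper: expose $[n+n']^{2c}[n]^{2c}$ via two applications of \eqref{b2}, replace it using Theorem~\ref{thm1}, and then simplify the two flanks. Your ``front reduction'' and ``back reduction'' are precisely Lemma~\ref{lem:1414} and its limit Lemma~\ref{limitoflem4} (equations \eqref{eq:11} and \eqref{eq:12}), specialized to the pairs $(n,n')\mapsto(n,2n+n')$ and $(n,n')\mapsto(2n+n',n+n')$ respectively; the paper proves these as standalone lemmas and then simply cites them, whereas you redevelop them inline with the explicit $\bmod\ L$ truncation.
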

To prove this theorem, 
we consider some lemmas.
\begin{lem}\label{lem:1414}
If
$\{n',n\}=c^{-1}$ $(c \in \mathbb{Q}\setminus\{0\})$, then we obtain
\begin{align}
    &[n']^c 
    \left(
    \prod_{0\le p\le l}^{\longrightarrow}[n+pn']^{2c}
    \right) \notag\\
    &=
    [n]^{2c}
    \left(
    \prod_{1\le p\le l}^{\longrightarrow}
    ([2n+(2p-1)n']^c 
    [n+pn']^{4c}) \notag
    \right) \\
    &\quad\times
    [2n+(2l+1)n']^c 
    [n+(l+1)n']^{2c}
    [n']^c
    \notag
\end{align}
\end{lem}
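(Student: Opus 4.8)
The plan is to prove Lemma~\ref{lem:1414} by induction on $l$, parallel to the proof of Lemma~\ref{lem1} but with the identity (\ref{b2}) playing the role that (\ref{B2}) played there.

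For the base case $l=0$ the product $\prod_{1\le p\le 0}^{\longrightarrow}$ on the right-hand side is empty, while $\prod_{0\le p\le 0}^{\longrightarrow}[n+pn']^{2c}=[n]^{2c}$, so the asserted identity reduces to
\begin{equation*}
    [n']^c[n]^{2c}=[n]^{2c}\,[2n+n']^c\,[n+n']^{2c}\,[n']^c,
\end{equation*}
which is precisely (\ref{b2}) (valid since $\{n',n\}=c^{-1}$).

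For the inductive step, assume the statement holds for $l-1$. Peeling the $p=l$ factor off the ordered product and applying the induction hypothesis to the remaining part gives
\begin{align*}
    [n']^c\Bigl(\prod_{0\le p\le l}^{\longrightarrow}[n+pn']^{2c}\Bigr)
    &=\Bigl([n']^c\prod_{0\le p\le l-1}^{\longrightarrow}[n+pn']^{2c}\Bigr)[n+ln']^{2c}\\
    &=[n]^{2c}\Bigl(\prod_{1\le p\le l-1}^{\longrightarrow}([2n+(2p-1)n']^c[n+pn']^{4c})\Bigr)\\
    &\quad\times[2n+(2l-1)n']^c\,[n+ln']^{2c}\,[n']^c\,[n+ln']^{2c}.
\end{align*}
The key step is to rewrite the tail $[n']^c[n+ln']^{2c}$ using (\ref{b2}) with $n$ replaced by $n+ln'$; this is allowed because $\{n',\,n+ln'\}=\{n',n\}=c^{-1}$, and it produces $[n+ln']^{2c}\,[2n+(2l+1)n']^c\,[n+(l+1)n']^{2c}\,[n']^c$. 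The two adjacent copies of $[n+ln']^{2c}$ then combine into $[n+ln']^{4c}$, so that the block $[2n+(2l-1)n']^c\,[n+ln']^{4c}$ becomes exactly the $p=l$ term of $\prod_{1\le p\le l}^{\longrightarrow}([2n+(2p-1)n']^c[n+pn']^{4c})$, and the leftover tail is $[2n+(2l+1)n']^c\,[n+(l+1)n']^{2c}\,[n']^c$, which is the tail prescribed by the statement for $l$. This closes the induction.

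I do not expect an essential obstacle here: the argument is a direct telescoping once (\ref{b2}) is available, and the only points requiring care are the bilinear-form bookkeeping (checking $\{n',n+ln'\}=c^{-1}$ so that the specialization of (\ref{b2}) is legitimate) and tracking the arrow-ordered products so that the merged factor $[n+ln']^{4c}$ lands in the correct slot. Afterwards this lemma, together with a limiting argument over $L\in\mathcal{L}$ in the spirit of Lemma~\ref{cor1} and the proof of Theorem~\ref{thm1}, should yield Theorem~\ref{thm2} and hence formula (\ref{A22}).
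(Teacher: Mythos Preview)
Your proof is correct and follows essentially the same route as the paper's own argument: induction on $l$, with the base case being exactly (\ref{b2}) and the inductive step peeling off $[n+ln']^{2c}$, applying the hypothesis, and then using the specialization of (\ref{b2}) with $n$ replaced by $n+ln'$ to merge the two copies of $[n+ln']^{2c}$ into $[n+ln']^{4c}$. If anything, your write-up is slightly more explicit in verifying $\{n',n+ln'\}=c^{-1}$ before invoking (\ref{b2}).
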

\begin{proof}
We prove it by induction on $l$.
The case of $l=0$ is nothing less than
the equality (\ref{b2}).
\par
Let $l > 0$. 
Suppose that the claim is 
true in the case of $l-1$, then
\begin{align*}
    &[n']^{c}
    \left(\prod_{0 \le p \le l}^{\longrightarrow}
    [n + pn']^{2c}\right) \\
   & =
    [n']^{c}
    \left(\prod_{0 \le p \le l-1}^{\longrightarrow}
    [n + pn']^{2c}\right)
    [n+ln']^{2c} \\
    &=
   [n]^{2c}
    \left(
    \prod_{1\le p\le l-1}^{\longrightarrow}
    ([2n+(2p-1)n']^c 
    [n+pn']^{4c}) \notag
    \right) \\
    &\quad\times
    [2n+(2l-1)n']^c 
    [n+ln']^{2c}
    [n']^c [n+ln']^{2c}\\
    &=
    [n]^{2c}
    \left(
    \prod_{1\le p\le l}^{\longrightarrow}
    ([2n+(2p-1)n']^c 
    [n+pn']^{4c}) \notag
    \right) \\
    &\quad\times
    [2n+(2l+1)n']^c 
    [n+(l+1)n']^{2c}
    [n']^c
    \end{align*}
In the last equality, we use 
\begin{equation*}
    [n']^{c} [n+ln']^{2c}
=[n+ln']^{2c} [2n+(2l+1)n']^{c} [n+(l+1)n']^{2c}
[n']^{2},
\end{equation*}
which is a specialization of (\ref{b2}).
\end{proof}
Now we consider the limit of Lemma \ref{lem:1414}:
\begin{lem}\label{limitoflem4}
If $\{n',n\} = c^{-1}$, then we obtain
\begin{align}
    [n']^{c}     
    \left(\prod_{p \ge 0}
    ^{\longrightarrow}
    [n + pn']^{2c}\right)
    =
    [n]^{2c} 
    \left(\prod_{p \ge 1}
    ^{\longrightarrow}
    [2n + (2p-1)n']^c 
    [n+pn']^{4c}\right)
    [n']^{c} ,
    \label{eq:11}
    \\
    \left(\prod_{p \ge 0}
    ^{\longleftarrow}
    [n' + pn]^{2c}\right)
    [n]^{c}
    =
    [n]^{c} 
    \left(\prod_{p \ge 1}
    ^{\longleftarrow}
    [n'+pn]^{4c}
    [2n' + (2p-1)n]^c 
    \right)
    [n']^{2c} .
    \label{eq:12}
\end{align}
\end{lem}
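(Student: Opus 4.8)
The plan is to follow the pattern of Lemma \ref{cor1}: deduce \eqref{eq:11} from the finite identity of Lemma \ref{lem:1414} by passing to the quotients $G_L$, and then obtain \eqref{eq:12} from \eqref{eq:11} by interchanging $n$ and $n'$ (which replaces $c$ by $-c$) and taking inverses.

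First, to prove \eqref{eq:11}, fix $L \in \mathcal{L}$. By the cofiniteness of $N^+ \setminus L$, choose a non-negative integer $l$ with $n + (l+1)n' \in L$. Since $N^+ + L \subset L$ and $n + l n' \in N^+$, this forces $n + pn' \in L$ for all $p \ge l+1$, as well as $2n + (2p-1)n' \in L$ for all $p \ge l+1$ and $2n + (2l+1)n' = (n + ln') + (n + (l+1)n') \in L$. Consequently, reducing the identity of Lemma \ref{lem:1414} modulo $L$: on the left-hand side the factors $[n+pn']^{2c}$ with $p > l$ are $\equiv 1$, so it becomes $[n']^{c}\bigl(\prod_{p\ge0}^{\longrightarrow}[n+pn']^{2c}\bigr)$; on the right-hand side the trailing factors $[2n+(2l+1)n']^c[n+(l+1)n']^{2c}$ are $\equiv 1$ and the finite inner product may be extended to the infinite one (all the new factors being $\equiv 1$), so it becomes $[n]^{2c}\bigl(\prod_{p\ge1}^{\longrightarrow}[2n + (2p-1)n']^c[n+pn']^{4c}\bigr)[n']^{c}$. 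Hence \eqref{eq:11} holds modulo every $L \in \mathcal{L}$, and therefore in $G$.

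Next, for \eqref{eq:12}, observe that $\{n,n'\} = (-c)^{-1}$, so applying \eqref{eq:11} with the pair $(n',n)$ in place of $(n,n')$ and $-c$ in place of $c$ gives
\[
[n]^{-c}\Bigl(\prod_{p\ge0}^{\longrightarrow}[n'+pn]^{-2c}\Bigr)
=
[n']^{-2c}\Bigl(\prod_{p\ge1}^{\longrightarrow}[2n'+(2p-1)n]^{-c}[n'+pn]^{-4c}\Bigr)[n]^{-c}.
\]
Taking inverses of both sides, and using $([m]^a)^{-1} = [m]^{-a}$ together with the fact that inversion reverses the order of a product, turns each $\prod^{\longrightarrow}$ into the corresponding $\prod^{\longleftarrow}$ with exponents negated, which is exactly \eqref{eq:12}.

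The one point requiring care is the bookkeeping in this last inversion: within the inner product $\prod_{p\ge1}^{\longrightarrow}[2n'+(2p-1)n]^{-c}[n'+pn]^{-4c}$ the two factors attached to each index $p$ swap order upon inversion, so its inverse is $\prod_{p\ge1}^{\longleftarrow}[n'+pn]^{4c}[2n'+(2p-1)n]^c$, matching the right-hand side of \eqref{eq:12}; one must also confirm that the outer factors $[n]^{-c}$ and $[n']^{-2c}$ migrate to the correct ends. There is no genuine analytic difficulty, since every manipulation is carried out in the nilpotent quotients $G_L$ and the infinite products are well-defined elements of $G = \varprojlim_{L} G_L$.
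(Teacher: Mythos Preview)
Your proof is correct and follows essentially the same route as the paper: you pass to the quotients $G_L$ and use Lemma~\ref{lem:1414} to obtain \eqref{eq:11}, then swap $n\leftrightarrow n'$, replace $c$ by $-c$, and invert to get \eqref{eq:12}. Your bookkeeping (verifying explicitly which factors lie in $L$ and tracking the pairwise swap inside the product upon inversion) is, if anything, more careful than the paper's.
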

\begin{proof}
Let $L \in \mathcal{L}$.
Then, there exist 
some positive 
integer $l$ such that
$n+ln'\in L$.
Then, by Lemma \ref{lem:1414}, 
we obtain
\begin{align*}
    &[n']^c
    \prod_{p\ge 0}^{\longrightarrow}
    [n+pn']^{2c}\\
    \equiv 
    &[n']^c 
    \left(
    \prod_{0\le p\le l}^{\longrightarrow}[n+pn']^{2c}
    \right) \mod L\\
    &=
    [n]^{2c}
    \left(
    \prod_{1\le p\le l}^{\longrightarrow}
    ([2n+(2p-1)n']^c 
    [n+pn']^{4c}) 
    \right) \\
    &\quad\times
    [2n+(2l+1)n']^c 
    [n+(l+1)n']^{2c}
    [n']^c
    \\
    &\equiv
    [n]^{2c} 
    \left(\prod_{p \ge 1}
    ^{\longrightarrow}
    [2n + (2p-1)n']^c 
    [n+pn']^{4c}\right)
    [n']^{c} \mod L
\end{align*}
Thus, the equality (\ref{eq:11})
holds.
\par
Since $\{n, n'\}=(-c)^{-1}$, by 
(\ref{eq:11}), we obtain
\begin{align}
        [n]^{-c}     
    \left(\prod_{p \ge 0}
    ^{\longrightarrow}
    [n' + pn]^{-2c}\right)
    =
    [n']^{-2c} 
    \left(\prod_{p \ge 1}
    ^{\longrightarrow}
    [2n' + (2p-1)n]^{-c} 
    [n'+pn]^{-4c}\right)
    [n]^{-c}.\notag
\end{align}
By taking the inverse of both sides of 
this equality, we have the equality
(\ref{eq:12}).
\end{proof}
\begin{proof}[Proof of Theorem \ref{thm2}]
Using the pentagon relations, 
Theorem \ref{thm1} and
Lemma \ref{limitoflem4}, we can calculate as follows:
\begin{align*}
    &[n']^c [n]^{4c} 
    \\
    &= [n']^c [n]^{2c} [n]^{2c}
    \\
    &= [n]^{2c} [2n+n']^c 
    [n+n']^{2c} [n']^c [n]^{2c}
    \\
    &= [n]^{2c} [2n+n']^c [n+n']^{2c} 
    [n]^{2c} [2n+n']^c [n+n']^{2c} [n]^c
    \\
    &\buildrel{(\ref{eq0})}\over{=} [n]^{2c} [2n+n']^c 
    \\
    &\quad
    \times \prod_{p\ge 0}^{\longrightarrow} [n+p(2n+n')]^{2c} 
    \\
    &\quad
    \times\prod_{p\ge 0} [2^p(2n+n')]^{4c/2^p}
    \\
    &\quad
    \times\prod_{p\ge 0}^{\longleftarrow}
    [n+n'+p(2n+n')]^{2c}
    \\
    &\quad\times[2n+n']^c[n+n']^{2c}[n']^c
    \\
    &\buildrel{(\ref{eq:11}), (\ref{eq:12})}\over{=}
    [n]^{2c} \times [n]^{2c} 
    \left(\prod_{p\ge 1}^{\longrightarrow}
    ([2n+(2p-1)(2n+n')]^c
    [n+p(2n+n')]^{4c})\right)
    [2n+n']^c
    \\
    &\quad\times
    \prod_{p\ge 0}[2^p(2n+n')]^{4c/2^p}
    \times
    [2n+n']^c
    \\&\quad
    \times\left(\prod_{p\ge 1}^{\longleftarrow}
    ([(n+n')+p(2n+n')]^{4c}
    [2(n+n')+(2p-1)(2n+n')]^c)\right)
    \\&\quad
    \times[n+n']^{2c}[n+n']^{2c}[n']^c
    \\&
    =
    \prod_{p\ge 0}^{\longrightarrow}
    ([n+p(2n+n')]^{4c}[2n+(2p+1)(2n+n')]^c)
    \\&\quad
    \times [2n+n']^{2c} \prod_{p\ge 0} 
    [2^p(2n+n')]^{4c/2^p}
    \\&\quad
    \times
    \prod_{p\ge 0}^{\longleftarrow}
    [(n+n')+p(2n+n')]^{4c}
    [2(n+n')+(2p-1)(2n+n')]^c 
    \\&
    =
    \prod_{p\ge 0}^{\longrightarrow}
    ([(2p+1)n+p n']^{4c} 
    [(4p+4)n+(2p+1)n']^c) \notag
    \\
    &\quad\times 
    [2n+n']^{2c} \prod_{p\ge 0} 
    [2^p (2n+n')]^{4c / 2^p} \notag
    \\
    &\quad\times
    \prod_{p\ge 0}^{\longleftarrow}
    ([(2p+1)n+(p+1)n']^{4c}
    [4pn+(2p+1)n']^c
\end{align*}
This completes the proof.
\end{proof}
\bibliography{affine}
\bibliographystyle{alpha}
\end{document}